\newtheorem{theorem}{Theorem}[section]
\newtheorem{lemma}{Lemma}[section]
\newtheorem{pr}{Proposition}[section]
\title
{Explicit formulas of the Bergman kernel for some Reinhardt domains}
\author{\normalsize Tomasz Beberok \\
\small Faculty of Mathematics and Computer Science, Jagiellonian University,\\
\small Lojasiewicza 6, 30-048 Krakow, Poland \\}
\date{}
\begin{document}

\begin{center}
  \textbf{Explicit formulas of the Bergman kernel for some Reinhardt domains}
\end{center}
\vskip1em
\begin{center}
  Tomasz Beberok
\end{center}

\vskip3em

In this paper we obtain the closed forms of some hypergeometric functions. As an application, we obtain the explicit forms of the Bergman kernel functions for Reinhardt domains $\{|z_3|^{\lambda} < |z_1|^{2p}  + |z_2|^2, \quad |z_1|^{2p}  + |z_2|^2 < |z_1|^{p} \}$  and \\ $\{|z_4|^{\lambda} < (|z_1|^2 + |z_2|^2)^{p}  + |z_3|^2, \quad (|z_1|^2 + |z_2|^2)^{p}  + |z_3|^2 < (|z_1|^2 + |z_2|^2 )^{p/2} \}$.
\vskip1em

\textbf{Keyword:} Bergman kernel, hypergeometric functions
\vskip1em
\textbf{AMS Subject Classifications:} 32A25;  33D70

\section{\bf Introduction}

In 1921, S. Bergman introduced a kernel function, which is now known as the Bergman kernel function. It is well known that there exists a unique Bergman kernel function for each bounded domain in $\mathbb{C}^n$. Computation of the Bergman kernel function by explicit formulas is an important research direction in several
complex variables. For which domains can the Bergman kernel function be computed by explicit formulas?  Many mathematicians (\cite{DA1},  \cite{FH}, \cite{F2}, \cite{F3}, \cite{Park1}) have made efforts to find the explicit formulas of the Bergman kernel for nonhomogeneous domains. Consider the complex ellipsoids or egg domains $\Omega_{p} := \{z \in \mathbb{C}^n : \sum\limits_{j=1}^{n} |z_j |^{2p_j} <1\}$, where $p = (p_1, . . . , p_n)$ for $p_j > 0$. The precise growth estimate of the Bergman kernel near a boundary point on the complex ellipsoid was studied in \cite{ZY}. However, it is not easy to get the closed forms of the Bergman kernel for $D_p$.
In the case when $p_1, . . . , p_n$ are reciprocals of positive integers, Zinov'ev \cite{Z} computed the Bergman kernel for $D_p$ explicitly. What happens if each $p_j$ is a positive integer? The known case is when $p = (1, . . . , 1, p_n), p_n > 0$, for which  J. P. D’Angelo \cite{DA1, DA2} obtained the Bergman kernel. J.-D. Park computed the Bergman kernel for  $p =(2,2)$ and $p=(2,2,2)$ in \cite{Park1} and \cite{Park2} respectively. The goal of this paper is to give explicit formula for the domains $\{|z_3|^{\lambda} < |z_1|^{2p}  + |z_2|^2, \ |z_1|^{2p}  + |z_2|^2 < |z_1|^{p} \}$  and $\{|z_4|^{\lambda} < (|z_1|^2 + |z_2|^2)^{p}  + |z_3|^2, \ (|z_1|^2 + |z_2|^2)^{p}  + |z_3|^2 < (|z_1|^2 + |z_2|^2)^{p/2} \}$.

\begin{theorem}
For any positive real numbers $\lambda, p$ the Bergman kernel for the domain  $$D_1=\{z \in  \mathbb{C}^4 \colon |z_4|^{\lambda} < (|z_1|^2 + |z_2|^2)^{p}  + |z_3|^2, \ (|z_1|^2 + |z_2|^2)^{p}  + |z_3|^2 < (|z_1|^2 + |z_2|^2)^{\frac{p}{2}}\}$$ is given by

\begin{align*}
K_{D_1}((z_1,z_2,z_3,z_4),(\zeta_1,\zeta_2,\zeta_3, \zeta_4))= D_{p,\lambda, \alpha} \left\{  C \frac { ( \frac{2}{\lambda} -1 )(\sqrt{1-4\nu_3}-1)  + \frac{8\nu_3}{\lambda} }{(1-\mu_4)^2 (1- \mu_1 - \mu_2)^2 }  \right. \\  \left.  +  C  \frac{ 4 \sqrt{1-4\nu_3}  + 16\nu_3 - 4} {p (1-\mu_4)^2 (1- \mu_1 - \mu_2)^3}    +  C   \frac{4\mu_4 \left(\sqrt{1-4\nu_3}  + 4\nu_3 - 1\right) }{\lambda (1-\mu_4)^3 (1- \mu_1 - \mu_2)^2} \right\} ,
\end{align*}
where
$$ \nu_1 =    z_1 \overline{\zeta_1}, \quad  \nu_2 =    z_2 \overline{\zeta_2}, \quad \nu_3 =    z_3 \overline{\zeta_3},  \quad  \nu_4 =    z_4 \overline{\zeta_4}, $$
$$D_{p,\lambda, \alpha}f = \frac{p}{ \pi^4} \left(\frac{2}{p}  \frac{\partial}{\partial \nu_1} \nu_1 + \frac{2}{p}   \frac{\partial}{\partial \nu_2} \nu_2 +  \frac{\partial}{\partial \nu_3} \nu_3 + \frac{2}{\lambda}  \frac{\partial}{\partial \nu_4} \nu_4 \right)f ,$$
$$\mu_1=  \frac{2^{2/p} \nu_1} {\left(1+ \sqrt{1- 4\nu_3}\right)^{2/p} }, \quad \mu_2=  \frac{2^{2/p} \nu_2} {\left(1+ \sqrt{1- 4\nu_3}\right)^{2/p} } , \quad  \mu_4=  \frac{2^{2/\lambda} \nu_4} {\left(1+ \sqrt{1- 4\nu_3}\right)^{2/\lambda} }$$
and
$$ C=\frac{ 2^{4/p + 2/\lambda} }{4\nu_3 (1-4\nu_3)^{3/2} (1+ \sqrt{1- 4\nu_3})^{4/p + 2/\lambda -1}}.$$

\end{theorem}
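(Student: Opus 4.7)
The plan is to compute $K_{D_1}$ via the monomial expansion of the Bergman kernel. Since $D_1$ is a complete Reinhardt domain, $\{z^\alpha\}_{\alpha \in \mathbb{Z}_{\geq 0}^4}$ is an orthogonal basis of $A^2(D_1)$, so with $\nu_j = z_j \overline{\zeta_j}$ we have
\[
K_{D_1}(z,\zeta) = \sum_{\alpha \in \mathbb{Z}_{\geq 0}^4} \frac{\nu^\alpha}{\|z^\alpha\|^2_{L^2(D_1)}},
\]
and the task splits into evaluating $\|z^\alpha\|^2$ in closed form and then summing the resulting series.

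To compute $\|z^\alpha\|^2$, I would pass to polar coordinates $z_j = r_j e^{i\theta_j}$ and substitute $u_j = r_j^2$. The angular integrations yield $(2\pi)^4$; integrating $u_4$ first against $u_4^{\lambda/2} < (u_1+u_2)^p + u_3$ produces a factor $\frac{1}{\alpha_4+1}\bigl[(u_1+u_2)^p + u_3\bigr]^{(2\alpha_4+2)/\lambda}$. Introducing the slice coordinates $t = u_1 + u_2$ and $v = u_1/t$, the $v$-integration gives $B(\alpha_1+1, \alpha_2+1)$, and the substitution $\sigma = t^{p/2}$ converts the second defining inequality to the affine constraint $\sigma^2 + u_3 < \sigma$. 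The residual $(u_3,\sigma)$-integral is an Euler-type hypergeometric integral; using the closed-form hypergeometric identities developed earlier in the paper, one evaluates it to obtain an explicit expression for $\|z^\alpha\|^2$ whose denominator carries precisely the weights $\frac{2(\alpha_1+1)}{p}$, $\frac{2(\alpha_2+1)}{p}$, $\alpha_3+1$, $\frac{2(\alpha_4+1)}{\lambda}$, matching the eigenvalues of $D_{p,\lambda,\alpha}$ on monomials.

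With $\|z^\alpha\|^2$ in hand, I would sum the Bergman series in the order $\alpha_3$, then $(\alpha_1,\alpha_2)$, then $\alpha_4$. The $\alpha_3$-summation is the crux: the surviving combinatorial coefficients form a central-binomial/Catalan-type generating function whose closed form introduces the factor $\sqrt{1-4\nu_3}$. This is exactly what forces the substitution $\mu_j = 2^{2/\kappa_j} \nu_j / (1 + \sqrt{1-4\nu_3})^{2/\kappa_j}$ with $\kappa_1 = \kappa_2 = p$ and $\kappa_4 = \lambda$, which rewrites the remaining sums in the clean variables $\mu_j$. The $(\alpha_1+\alpha_2)$-sum then collapses as a binomial series $\sum_n \binom{n+k}{k}(\mu_1+\mu_2)^n = (1-\mu_1-\mu_2)^{-(k+1)}$, and the $\alpha_4$-sum as a geometric-type series giving $(1-\mu_4)^{-k}$, for the small values of $k$ that appear in the claimed formula. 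Factoring out the common prefactor $C$ assembles the three-term bracket, and since the weights $\frac{2(\alpha_j+1)}{\kappa_j}$ in the norm denominator are precisely the eigenvalues of $D_{p,\lambda,\alpha}$ on $\nu^\alpha$, the kernel appears as $D_{p,\lambda,\alpha}$ applied to a simpler potential.

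The principal obstacle is the simultaneous collapse of the four nested sums, and in particular the interaction between the Catalan-type $\alpha_3$-summation (which produces the irrational factor $\sqrt{1-4\nu_3}$) and the remaining $(\alpha_1,\alpha_2,\alpha_4)$-sums (which must simultaneously absorb that square root and the $p$- and $\lambda$-dependent powers into the $\mu_j$). Careful bookkeeping of Pochhammer symbols, binomials, and the prefactors $2^{2/p}$, $2^{2/\lambda}$ is where computational errors are most likely; the cleanest route, already indicated by the structure of the statement, is to first assemble the potential function underlying the kernel and apply $D_{p,\lambda,\alpha}$ only at the very end.
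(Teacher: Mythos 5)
Your plan follows essentially the same route as the paper: monomial orthogonal basis, closed-form evaluation of $\|z^\alpha\|^2$ by iterated polar/beta integrals, summation over $\alpha_3$ first (which is where $\sqrt{1-4\nu_3}$ and the substitution to the variables $\mu_j$ arise, via Lemma \ref{lem3} and the quadratic-argument closed form for the Gauss hypergeometric function), collapse of the remaining $(\alpha_1,\alpha_2)$- and $\alpha_4$-sums by binomial and geometric series, and application of $D_{p,\lambda,\alpha}$ only at the end. Two minor corrections: $D_1$ is not a complete Reinhardt domain (points with $z_1=z_2=0$ violate the second defining inequality), so the completeness of $\{z^\alpha \colon \alpha\geq 0\}$ must instead be justified by observing that $D_1$ meets every coordinate hyperplane, which forces the Laurent coefficients with negative indices to vanish; and the residual norm integral is elementary (a product of beta integrals yielding Gamma functions), with the hypergeometric identities entering only in the subsequent summation rather than in the evaluation of $\|z^\alpha\|^2$.
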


\begin{theorem}
 The Bergman kernel for  $$D_2=\{z \in  \mathbb{C}^3 \colon |z_3|^{2} < |z_1|^{4}  + |z_2|^2, \quad |z_1|^{4}  + |z_2|^2 < |z_1|^{2} \}$$ is given by

\begin{align*}
K_{D_2}((z_1,z_2,z_3),(\zeta_1,\zeta_2,\zeta_3))= \frac{2\nu_1^4 -( \nu_1^2 \nu_3 + \nu_1^3)(\nu_1^2 + \nu_2)}{ \pi^3 (\nu_1 - \nu_3)^3 (\nu_1 - \nu_1^2 - \nu_2)^3} ,
\end{align*}
where $ \nu_1 =    z_1 \overline{\zeta_1}, \quad  \nu_2 =    z_2 \overline{\zeta_2}, \quad \nu_3 =    z_3 \overline{\zeta_3}$.

\end{theorem}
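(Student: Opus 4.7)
The plan is to reduce $D_2$ to a Hartogs-type domain over the unit ball in $\mathbb{C}^2$ by a change of coordinates, compute the Bergman kernel there by expansion in an orthogonal monomial basis, and pull back.

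First I would observe that the map $\Phi(z_1,z_2,z_3) = (z_1,\, z_2/z_1,\, z_3/z_1)$ is a biholomorphism of $D_2 \setminus \{z_1 = 0\}$ onto
\[
D_2' := \{(u,v,w) \in \mathbb{C}^3 : |w|^2 < |u|^2+|v|^2,\ |u|^2+|v|^2 < 1\},
\]
as the inverse $(u,v,w) \mapsto (u, uv, uw)$ transforms the two defining inequalities of $D_2$ into those of $D_2'$ after dividing by $|u|^2$. Since $\{z_1 = 0\}$ has measure zero, it suffices to compute $K_{D_2'}$ and transport it back via the Bergman transformation law.

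Now $D_2'$ is a Hartogs domain over the unit ball $B_2 \subset \mathbb{C}^2$ with fiber $\{|w|^2 < |u|^2+|v|^2\}$, giving the standard decomposition
\[
K_{D_2'}((u,v,w),(u',v',w')) = \sum_{m \geq 0} \frac{m+1}{\pi}\,(w\bar{w'})^m\, K^{(m)}(u,v;u',v'),
\]
where $K^{(m)}$ is the Bergman kernel of $B_2$ weighted by $(|u|^2+|v|^2)^{m+1}$. To compute $K^{(m)}$ I would expand in the orthogonal monomial basis $\{u^{a_1}v^{a_2}\}$, pass to $t_j = |z_j|^2$, and use $t_1 = s\xi$, $t_2 = s(1-\xi)$ to reduce each weighted norm to a product of Beta integrals. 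The binomial identity $\sum_{a_1+a_2=N}\binom{N}{a_1}(u\bar{u'})^{a_1}(v\bar{v'})^{a_2} = x^N$ with $x := u\bar{u'}+v\bar{v'}$ then collapses the double monomial sum into $\sum_N (N+1)(N+m+3)\,x^N$, which is a combination of derivatives of $1/(1-x)$ and hence rational with denominator $(1-x)^3$. Summing next in $y := w\bar{w'}$ over $m$ (again a derivative-of-geometric-series manipulation) produces a closed form for $K_{D_2'}$ with denominator $(1-x)^3(1-y)^3$.

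Finally, I would apply $K_{D_2}(z,\zeta) = J_\Phi(z)\,\overline{J_\Phi(\zeta)}\,K_{D_2'}(\Phi(z),\Phi(\zeta))$ with $J_\Phi = z_1^{-2}$. Under $\Phi$ one has $x = \nu_1+\nu_2/\nu_1$ and $y = \nu_3/\nu_1$, so $1-x = (\nu_1-\nu_1^2-\nu_2)/\nu_1$ and $1-y = (\nu_1-\nu_3)/\nu_1$; clearing $\nu_1$-denominators reproduces exactly the denominator $(\nu_1-\nu_3)^3(\nu_1-\nu_1^2-\nu_2)^3$ of the claim and reorganizes the pulled-back numerator into a polynomial in $\nu_1,\nu_2,\nu_3$. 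The main technical obstacle will be the algebraic bookkeeping needed to match that numerator to the compact form $2\nu_1^4 - (\nu_1^2\nu_3+\nu_1^3)(\nu_1^2+\nu_2)$ stated in the theorem. As a cross-check, one could instead sum the Reinhardt series $\sum_\alpha \nu^\alpha/\|z^\alpha\|^2$ directly, keeping in mind that since $D_2$ is not complete Reinhardt in $z_1$ the admissible multi-indices $\alpha \in \mathbb{Z}^3$ extend to certain negative values of $\alpha_1$, and verify that the resulting triple hypergeometric sum collapses to the same rational function.
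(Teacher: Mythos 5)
Your route is genuinely different from the paper's. The paper works entirely on $D_2$ itself: it identifies the complete orthogonal system $\{z^{\alpha}:\alpha_2,\alpha_3\ge 0,\ \alpha_1\ge -2-\alpha_2-\alpha_3\}$ (negative powers of $z_1$ are allowed because $z_1\neq 0$ on $D_2$), computes $\|z^{\alpha}\|^2$ by iterated polar/spherical substitutions, and then sums $\sum_{\alpha}\nu^{\alpha}/\|z^{\alpha}\|^2$ directly after the index shift $\alpha_1=k-2-\alpha_2-\alpha_3$, using $\sum(k+1)x^k$, $\sum(k+1)^2x^k$ and $F_2(r,1,1;1,1;x,y)=(1-x-y)^{-r}$. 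Your conjugation by $\Phi(z)=(z_1,z_2/z_1,z_3/z_1)$ to the Hartogs domain $D_2'$ over the ball, followed by the weighted-kernel decomposition and the transformation law, is correct and arrives at the same generating-function identities; it has the advantage of explaining structurally where the negative powers of $z_1$ and the factor $\nu_1^{-2}$ come from, at the cost of invoking the transformation law and the Hartogs fiber decomposition. Two small points to tighten: $D_2$ does not meet $\{z_1=0\}$ at all (the second defining inequality forces $z_1\neq 0$), and $\Phi$ maps $D_2$ onto $D_2'\setminus\{u=0\}$ rather than onto $D_2'$; to pass between the kernels of $D_2'\setminus\{u=0\}$ and $D_2'$ you should cite the $L^2$ Riemann removable-singularity theorem across the analytic set $\{u=0\}$, not merely that the set has measure zero.

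One concrete warning about your final step. Carrying your computation through gives
\begin{align*}
K_{D_2'}=\frac{1}{\pi^3}\left[\frac{2}{(1-x)^3(1-y)^2}+\frac{1+y}{(1-x)^2(1-y)^3}\right],
\end{align*}
and after pulling back the numerator over $(\nu_1-\nu_3)^3(\nu_1-\nu_1^2-\nu_2)^3$ is
\begin{align*}
2\nu_1^3(\nu_1-\nu_3)+(\nu_1^3+\nu_1^2\nu_3)(\nu_1-\nu_1^2-\nu_2)=3\nu_1^4-\nu_1^3\nu_3-(\nu_1^2\nu_3+\nu_1^3)(\nu_1^2+\nu_2),
\end{align*}
which differs from the printed numerator $2\nu_1^4-(\nu_1^2\nu_3+\nu_1^3)(\nu_1^2+\nu_2)$ by $\nu_1^3(\nu_1-\nu_3)$. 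The paper's own series, summed to the end, gives the same answer as yours, and the coefficient of $\nu_1^{-2}$ in the kernel must equal $1/\|z_1^{-2}\|^2_{L^2(D_2)}=3/\pi^3$, which matches the $3\nu_1^4$ version and not the printed one. So do not expect your bookkeeping to reproduce the stated compact form exactly; the discrepancy is in the statement, not in your method.
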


In 1995 Francsics and Hanges \cite{FH} expressed the Bergman kernel for complex ellipsoids $\Omega_{p_1,...,p_n}$ in terms of Appell’s multivariable hypergeometric functions which are still infinite series. Recall that an Appell’s hypergeometric function \cite{AK} is defined by $$F^{(n)}_A (\alpha; \beta; \gamma; \zeta)=\sum_{m_1=0}^{\infty} \cdots \sum_{m_n=0}^{\infty} \frac{(\alpha)_{m_1+\ldots+m_n} (\beta_1)_{m_1} \cdots (\beta_n)_{m_n} }{m_1! \cdots m_n! (\gamma_1)_{m_1} \cdots (\gamma_n)_{m_n} } \zeta_1^{m_1} \cdots \zeta_n^{m_n}, $$

\noindent where $(a)_m = \Gamma(a+m)/\Gamma (a)$. In particular we write $F_2 = F^{(2)}_A$ and $F = F^{(1)}_A$. In fact, the Bergman kernel $K(z,w)$ for $\Omega_{p_1,...,p_n}$ is given in \cite{FH} by

\begin{eqnarray*}
K(z,w)=\frac{\prod\limits_{j=1}^n p_j}{\pi^n} \sum\limits_{k_1=0}^{p_1 -1} \cdots \sum\limits_{k_n=0}^{p_n - 1} \frac{\Gamma\left(1+ \sum\limits_{j=1}^n \frac{k_j +1}{p_j}\right)}{\prod\limits_{j=1}^n \Gamma\left(\frac{k_j +1}{p_j}\right)} (z\overline{w})^k  \\ \times F^{(n)}_A \left(1+ \sum\limits_{j=1}^n \frac{k_j +1}{p_j};\mathbf{1};\frac{\mathbf{k+1}}{\mathbf{p}};(z\overline{w})^p\right),
\end{eqnarray*}

\noindent where $(z\overline{w})^k = (z_1\overline{w_1})^{k_1}\cdots(z_n \overline{w_n})^{k_n} $. Here we following by Park used the notation $\mathbf{1} = \underbrace{(1,\ldots,1)}_n$ and $\frac{\mathbf{k+1}}{\mathbf{p}}=\underbrace{\left(\frac{k_1 +1}{p_1},\ldots, \frac{k_n +1}{p_n} \right)}_n.$

\section{\bf  Explicit formulas of hypergeometric functions}

In this section we will express the sum of the series $\sum\limits_{m=0}^{\infty} \frac{(a)_{2m_1+\ldots + 2m_n} x_1^{m_1} \cdots x_n^{m_n} }{(c)_{m_1+\ldots + m_n} m_1! \cdots m_n! }$ in terms of Gauss hypergeometric function.

\begin{lemma}\label{lem3}

\noindent For $|x_1| + \ldots + |x_r| <1/4$, we have

\begin{align*}
 F \left(\frac{a}{2},\frac{a+1}{2} ; c ; 4(x_1+ \ldots + x_r ) \right)=  \sum_{m=0}^{\infty} \frac{(a)_{2m_1+\ldots + 2m_n} x_1^{m_1} \cdots x_n^{m_n} }{(c)_{m_1+\ldots + m_n} m_1! \cdots m_n! }
\end{align*}

\end{lemma}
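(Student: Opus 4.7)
My plan is to start from the left-hand side and unfold it into a multi-indexed sum by combining the Pochhammer duplication identity with the multinomial theorem.

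First I would recall the duplication formula for Pochhammer symbols,
$$(a)_{2k}=4^{k}\left(\frac{a}{2}\right)_{k}\!\left(\frac{a+1}{2}\right)_{k},$$
which follows directly from the Legendre duplication formula for the Gamma function applied to $\Gamma(a+2k)/\Gamma(a)$. Substituting this into the single-variable Gauss series gives
$$F\!\left(\frac{a}{2},\frac{a+1}{2};c;4y\right)=\sum_{k=0}^{\infty}\frac{(a/2)_{k}((a+1)/2)_{k}}{(c)_{k}\,k!}(4y)^{k}=\sum_{k=0}^{\infty}\frac{(a)_{2k}}{(c)_{k}\,k!}\,y^{k}.$$

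Next I would set $y=x_{1}+\cdots+x_{n}$ (the statement's $r$ should read $n$) and expand $y^{k}$ by the multinomial theorem:
$$y^{k}=\sum_{m_{1}+\cdots+m_{n}=k}\frac{k!}{m_{1}!\cdots m_{n}!}\,x_{1}^{m_{1}}\cdots x_{n}^{m_{n}}.$$
Plugging this in, the factor $k!$ cancels, and since $2k=2m_{1}+\cdots+2m_{n}$ on the index set, I obtain
$$\sum_{k=0}^{\infty}\sum_{m_{1}+\cdots+m_{n}=k}\frac{(a)_{2m_{1}+\cdots+2m_{n}}\,x_{1}^{m_{1}}\cdots x_{n}^{m_{n}}}{(c)_{m_{1}+\cdots+m_{n}}\,m_{1}!\cdots m_{n}!},$$
which after reindexing is exactly the right-hand side.

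The only genuine issue is justifying the interchange of summations; this is where the hypothesis $|x_{1}|+\cdots+|x_{n}|<1/4$ is used. Replacing every $x_{j}$ by $|x_{j}|$ yields the same manipulations with all terms nonnegative, and the resulting single series is $F(a/2,(a+1)/2;c;4(|x_{1}|+\cdots+|x_{n}|))$, which converges absolutely because its argument lies in $(0,1)$. Therefore the double series converges absolutely, Fubini/Tonelli permits the reordering, and the identity is established. No step should pose a serious obstacle; the main point is simply to recognize that the duplication formula is what links the Gauss parameters $(a/2,(a+1)/2)$ to the single Pochhammer $(a)_{2k}$.
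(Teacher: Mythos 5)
Your proof is correct, but it follows a genuinely different and considerably more elementary route than the paper's. You and the paper share the same opening move — the Pochhammer duplication identity $(a)_{2k}=4^{k}(a/2)_{k}((a+1)/2)_{k}$ — but from there you simply expand $(x_1+\cdots+x_n)^k$ by the multinomial theorem, cancel the $k!$, and invoke Tonelli on the dominating series with $|x_j|$ in place of $x_j$ to justify the rearrangement. The paper instead works from the right-hand side: after applying duplication it sums out the $x_1$ variable to produce a Gauss ${}_2F_1$ factor, recognizes the resulting expression as an instance of the Hasanov--Srivastava decomposition formula for the Lauricella function $F_A^{(r)}$, specializes $b_i=c_i$ to collapse the inner $F_A^{(r-1)}$ to $(1-y_2-\cdots-y_r)^{-a}$, and then applies a further reduction formula after the substitution $y_1=4x_1$, $y_i=x_i/|x|$. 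Your argument buys transparency and self-containedness (multinomial theorem plus absolute convergence, with the hypothesis $|x_1|+\cdots+|x_n|<1/4$ doing exactly the work of guaranteeing the dominating series converges); the paper's buys a connection to the Appell/Lauricella reduction machinery it cites, at the cost of invoking two nontrivial external identities where none are needed. You are also right that the $r$ in the statement should be the $n$ appearing on the right-hand side.
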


\begin{proof}
Using well known rules for Pochhammer symbol $(2z)_{2k}=4^k (z)_k (z+1/2)_k$ we have
$$\sum_{m=0}^{\infty} \frac{(a)_{2m_1+\ldots + 2m_n} x_1^{m_1} \cdots x_n^{m_n} }{(c)_{m_1+\ldots + m_n} m_1! \cdots m_n! }= \sum_{m=0}^{\infty} \frac{4^{|m|} (a/2)_{|m|} (a/2+1/2)_{|m|}}{(c)_{|m|} m_1! \cdots m_n!} x_1^{m_1} \cdots x_n^{m_n}  .$$

Now using $(z)_{n+k}=(z)_n (z+n)_k$ and sum out of $x_1$ variable we have

\begin{eqnarray*}
\sum_{m=0}^{\infty} \frac{4^{|m|} (a/2)_{|m|} (a/2+1/2)_{|m|}}{(c)_{|m|} m_1! \cdots m_n!} x_1^{m_1} \cdots x_n^{m_n} =\\ \sum_{m_2,\ldots,m_n=0}^{\infty}  \frac{4^{m_2 + \ldots + m_n} (a/2)_{m_2 + \ldots + m_n} (a/2+1/2)_{m_2 + \ldots + m_n}}{(c)_{m_2 + \ldots + m_n} m_2! \cdots m_n!} x_2^{m_2} \cdots x_n^{m_n} \\ \times F\left(\frac{a}{2}+ m_2 + \ldots + m_n , \frac{a+1}{2} + m_2 + \ldots + m_n; c+ m_2 + \ldots + m_n; 4x_1 \right).
\end{eqnarray*}

In the other hand by decomposition formulas for the Appell function $F^{(r)}_A$  in $r$ ($r>1$) variables we have (see for more details \cite{HS})

\begin{align*}
&F^{(r)}_A (a,b_1,\ldots,b_r ; c_1,\ldots, c_r; y_1,\ldots,y_r )=  \\ &\sum_{m_2,\ldots,m_r=0}^{\infty}  \frac{ (a)_{m_2 + \ldots + m_r} (b_1)_{m_2 + \ldots + m_r} (b_2)_{m_2} \cdots (b_r)_{m_r} }{m_2! \cdots m_r!  (c_1)_{m_2 + \ldots + m_r} (c_2)_{m_2} \cdots (c_r)_{m_r} } y_1^{m_2 + \ldots + m_r} y_2^{m_2} \cdots y_r^{m_r} \\ & \cdot F\left(a + m_2 + \ldots + m_r , b_1 + m_2 + \ldots + m_r; c_1+ m_2 + \ldots + m_r; y_1 \right)\\ & \cdot F^{(r-1)}_A( a + m_2 + \ldots + m_r, b_2+m_2, \ldots , b_r+ m_r; c_2 + m_2, \ldots, c_r + m_r; y_2,\ldots,y_r)
\end{align*}
Next we set $b_i=c_i$ for $i=2,\ldots,r$. After doing so, and using well know formula $F^{(s)}_A (a,b_1,\ldots,b_s; b_1,\ldots,b_s;z_1,\ldots,z_s) = \frac{1}{(1-z_1-\ldots - z_s)^a} $ we obtain

\begin{align*}
&F^{(r)}_A (a,b_1,b_2,\ldots,b_r ; c_1,b_2\ldots, b_r; y_1,\ldots,y_r )=  \\ &\sum_{m_2,\ldots,m_r=0}^{\infty}  \frac{ (a)_{m_2 + \ldots + m_r} (b_1)_{m_2 + \ldots + m_r}  }{m_2! \cdots m_r!  (c_1)_{m_2 + \ldots + m_r} } y_1^{m_2 + \ldots + m_r} y_2^{m_2} \cdots y_r^{m_r} \\ & \cdot F\left(a + m_2 + \ldots + m_r , b_1 + m_2 + \ldots + m_r; c_1+ m_2 + \ldots + m_r; y_1 \right)\\& \cdot  \frac{1}{(1-y_2-\ldots -y_r)^{a+m_2 + \ldots + m_r}}
\end{align*}

Finally putting $y_1=4 x_1$ and $y_i = \frac{ x_i}{|x|}$ for $i=2, \ldots, r$, where $|x|=x_1+\ldots + x_r$ and using well know formula $F^{(s)}_A(a,b_1,\ldots,b_i,\ldots,b_s; c_1,\ldots,b_i,\ldots,c_s;z_1,\ldots,z_s)= (1-z_i)^{-a} F^{(s-1)}_A(a,b_1,\ldots,b_{i-1},b_{i+1},\ldots,b_s; c_1,\ldots,c_{i-1},c_{i+1},\ldots,c_s;\frac{z_1}{1-z_i},\ldots,\frac{z_s}{1-z_i}) $  we obtain the desired result.
\end{proof}

The following lemma will be useful to explicit computation of Bergman kernel function for the domain  $$\Omega= \left\{ (z_1,z_2,z_3) \in \mathbb{C}^{n+m+k}  \colon  \|z_1\|^{\lambda} < \|z_2\|^{2p} + \|z_3\|^2, \ \|z_2\|^{2p} + \|z_3\|^2  < \|z_2\|^{p} \right\}$$ for $k=2$ and $k=3$.

\begin{lemma}\label{lem2f1}

   \begin{align*}
      &F\left( \frac{3+a}{2}, \frac{4+a}{2} ; a ; z \right)= \frac{  \left(-a-1 + \left(a - \frac{1}{2}\right )z \right) \left(\left(a - \frac{5}{2} \right)z - a\right) - \left(\frac{1-a}{2}\right) \left(\frac{2-a}{2}\right)z   }{ a (a+1) (a+2) z (z-1) (1-z)^{3/2} ( \sqrt{1-z} +1 )^a } \\ & \cdot 2^{a+1} (a (\sqrt{1-z} -1 ) + (a+1)z) - \frac{2^a \left(\left(a - \frac{5}{2} \right)z - a\right) \left(a+a^2\right)z   }{a (a+1) (a+2) (z-1)^2 \sqrt{1-z} (\sqrt{1-z}+1)^{a+2} }
   \end{align*}

\begin{align*}
      F\left( \frac{2+a}{2}, \frac{3+a}{2} ; a ; z \right)=& \frac{ 2^a (a-a^2)z  }{2 a (a+1)(z-1) \sqrt{1-z} (\sqrt{1-z} +1)^{a+1}  } \\ &+ \frac{ (-3/2 z -a) 2^a ((a-1) (\sqrt{1-z} - 1 )+ az)  }{ a (a+1) z (z-1) (1-z)^{3/2} (\sqrt{1-z} +1)^{a-1} }
   \end{align*}

\end{lemma}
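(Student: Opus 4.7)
The plan is to derive both identities from a single closed form via iterated differentiation. Starting from the Gauss formula
$$F\!\left(\alpha,\alpha+\tfrac{1}{2};2\alpha+1;z\right)=\left(\frac{2}{1+\sqrt{1-z}}\right)^{\!2\alpha},$$
which follows from the quadratic transformation $F(a,b;a+b+\tfrac12;z)=F(2a,2b;a+b+\tfrac12;(1-\sqrt{1-z})/2)$ with $b=a+\tfrac12$, a single application of the standard identity
$$\frac{d^{n}}{dz^{n}}\!\bigl[z^{c-1}F(A,B;c;z)\bigr]=(c-n)_{n}\,z^{c-n-1}\,F(A,B;c-n;z)$$
yields the auxiliary closed form
$$F\!\left(\alpha,\alpha+\tfrac{1}{2};2\alpha;z\right)=\frac{1}{\sqrt{1-z}}\!\left(\frac{2}{1+\sqrt{1-z}}\right)^{\!2\alpha-1}.$$
I would then apply the same differentiation identity with $\alpha=(3+a)/2$, $c=3+a$, $n=3$ for the first claim (producing the factor $(a)_{3}=a(a+1)(a+2)$) and with $\alpha=(2+a)/2$, $c=2+a$, $n=2$ for the second (producing $(a)_{2}=a(a+1)$), matching the denominator factors displayed in the lemma.

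The computational simplification making this tractable is that, with $s:=\sqrt{1-z}$ and $z=(1-s)(1+s)$, one has
$$z^{c-1}\cdot\frac{1}{s}\!\left(\frac{2}{1+s}\right)^{\!c-1}=\frac{2^{c-1}(1-s)^{c-1}}{s}.$$
Thus the first identity reduces to computing
$$\frac{1}{a(a+1)(a+2)\,z^{a-1}}\,\frac{d^{3}}{dz^{3}}\!\left[\frac{2^{a+2}(1-s)^{a+2}}{s}\right],$$
and the second to the analogous expression with $n=2$ and exponent $a+1$. Using $d/dz=-(2s)^{-1}d/ds$, each successive $z$-derivative is carried out in the variable $s$. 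After the differentiations and the final division by $z^{a-1}=(1-s)^{a-1}(1+s)^{a-1}$, rationalizing via $1-s=z/(1+s)$ produces in the denominator precisely the powers $(1+\sqrt{1-z})^{a}$ and $(1+\sqrt{1-z})^{a+2}$ (respectively $(1+\sqrt{1-z})^{a-1}$ and $(1+\sqrt{1-z})^{a+1}$) that appear in the lemma.

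The main obstacle is algebraic bookkeeping: three successive applications of $-(2s)^{-1}d/ds$ to $(1-s)^{a+2}/s$ produce a sum of many terms with mixed powers of $s$ and $(1-s)$ and $a$-dependent coefficients, and these must be collected into the two-term groupings stated. In particular, one must verify that the intermediate powers $(1+\sqrt{1-z})^{a+1}$ that naively appear cancel or combine to leave only the two claimed powers in each identity, and that the numerator polynomials in $z$ and $\sqrt{1-z}$ reorganize into the products of quadratic factors written by the author. The calculation is routine but lengthy; no conceptual difficulty arises.
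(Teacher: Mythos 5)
Your proposal is correct, and it takes a genuinely different route from the paper. The paper quotes two closed forms, for $F\left(\frac{a+3}{2},\frac{a+4}{2};a+2;z\right)$ and $F\left(\frac{a+3}{2},\frac{a+4}{2};a+3;z\right)$, and combines them via a three-term contiguous relation in the third parameter, with rational coefficients $C_1$, $C_2$; the two-term shape of the stated formulas is inherited directly from that recurrence. You instead start from the single elementary identity $F(\alpha,\alpha+\frac12;2\alpha+1;z)=\bigl(2/(1+\sqrt{1-z})\bigr)^{2\alpha}$ and lower the denominator parameter with the Euler-type rule $\frac{d^n}{dz^n}\bigl[z^{c-1}F(A,B;c;z)\bigr]=(c-n)_n z^{c-n-1}F(A,B;c-n;z)$; I checked that your parameter bookkeeping is right (for the first identity $2\alpha=a+3$, $c=a+3$, $n=3$ gives $(a)_3=a(a+1)(a+2)$; for the second $c=a+2$, $n=2$ gives $(a)_2$), that the auxiliary form $F(\alpha,\alpha+\frac12;2\alpha;z)=\frac{1}{\sqrt{1-z}}\bigl(2/(1+\sqrt{1-z})\bigr)^{2\alpha-1}$ is correct, and that your collapse $z^{c-1}\cdot\frac1s\bigl(\tfrac{2}{1+s}\bigr)^{c-1}=2^{c-1}(1-s)^{c-1}/s$ reproduces the paper's quoted formula at $c=a+2$ after one differentiation, so the method demonstrably generates the same family of closed forms. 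What your approach buys is self-containedness: the paper's ``well-known'' inputs and the nontrivial coefficients of its recurrence are all derived rather than cited, and the observation that the bracketed quantity reduces to $2^{c-1}(1-s)^{c-1}/s$ is exactly what makes three iterations of $-(2s)^{-1}\frac{d}{ds}$ feasible. What it costs is that the answer emerges as an unstructured polynomial in $s=\sqrt{1-z}$ that must then be reorganized into the author's two-term grouping, whereas the paper's recurrence delivers that grouping for free; as you note, this final reorganization is routine but must still be carried out to confirm the displayed numerators.
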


\begin{proof}
In order to prove the above lemma, we need the following well-known formulas:

$$ F\left( \frac{a+3}{2}, \frac{a+4}{2}; a+2; z \right) = \frac{2^{a+1}(a (\sqrt{1-z}-1) + (a+1)z) }{(a+2)z (1-z)^{3/2} ( \sqrt{1-z} +1)^a } \quad \text{and}$$

$$ F\left( \frac{a+3}{2}, \frac{a+4}{2}; a+3; z \right) = \frac{2^{a+2}}{ \sqrt{1-z} ( \sqrt{1-z} +1 )^{a+2} }$$

Now lemma \ref{lem2f1} follows from recurrence identity for Gauss hypergeometric function

\begin{align*}
& F\left( \frac{a+3}{2}, \frac{a+4}{2}; a; z \right)= C_2 F\left( \frac{a+3}{2}, \frac{a+4}{2}; a+2; z \right) \\& - \frac{\left(a+a^2 \right) z   }{ 4(a+1)(a+2)(z-1) }   \cdot C_1 F\left( \frac{a+3}{2}, \frac{a+4}{2}; a+3; z \right),
\end{align*}
where $C_2= \frac{ \left(-a-1 + \left(a - \frac{1}{2}\right )z \right) \left(\left(a - \frac{5}{2} \right)z - a\right) - \left(\frac{1-a}{2}\right) \left(\frac{2-a}{2}\right)z }{  a (a+1)  (z-1)}$ and $C_1= \frac{\left(a - \frac{5}{2} \right)z - a}{a(z-1)}$.

\end{proof}

\section{\bf Computation of the kernel.}

Let $\Omega$ be a bounded domain in $\mathbb{C}^N$. The Bergman projection operator is the orthogonal projection $P$ from $L^2(\Omega)$ to the closed subspace of holomorphic square integrable functions. The Bergman kernel function is the integral kernel associated with the Bergman projection $P$. The operator $P$ and the function $K$ are therefore related by $$Pf(\zeta)=\int_{\Omega} K(\zeta,\xi) f (\xi) dV(\xi). $$
 It is well known that $K$ can be expressed by summation of an orthonormal series. More precisely, suppose that $\{ \Phi_{\alpha}\}$ from a complete orthonormal set for the Hilbert space of holomorphic functions in $L^2(\Omega)$. Then we have $$K(\zeta,\xi)= \sum_{\alpha} \Phi_{\alpha}(\zeta)  \overline{\Phi_{\alpha}(\xi)} .$$ Let $\zeta=(z_1,z_2,z_3,z_4) \in \mathbb{C}^4.$ Put $\Phi_{\alpha}(\zeta)= z_1^{\alpha_1} z_2^{\alpha_2} z_3^{\alpha_3} z_4^{\alpha_4}$. It is well known, that function $f$ holomorphic in a Reinhardt domain $D \subset \mathbb{C}^n$  has a “global” expansion into a Laurent series $f(z)=\sum_{\alpha \in \mathbb{Z}^n} a_{\alpha} z^{\alpha}$, $z \in D$ (see Proposition 1.7.15 (c) in \cite{JP}). Moreover if $D \cap (\mathbb{C}^{j-1} \times \{0\} \times \mathbb{C}^{n-j} ) \neq \emptyset $, $j=1,\ldots, n$ then $a_{\alpha}=0$ for $\alpha \in \mathbb{Z}^n \setminus \mathbb{Z}^n_{+}$ (see Proposition 1.6.5 (c) in \cite{JP}). Therefore  $\{\Phi_{\alpha} \}$ such that each $\alpha_i \geq 0$ is a complete orthogonal set for $L^2(D_1)$. \newline If $D$ is a Reinhardt domain, $f \in L^2_a(D) := \mathcal{O}(D) \cap L^2(D)$,  $f(z)=\sum_{\alpha \in \mathbb{Z}^n} a_{\alpha} z^{\alpha}$, then $\{  z^{\alpha} \colon \alpha \in \sum(f)  \} \subset L^2_a(D), $ where $\sum(f):=\{ \alpha \in \mathbb{Z}^n \colon a_{\alpha} \neq 0 \}$ (for proof see \cite{JP} p. 67). Thus it is easy to check, that the set $\{z_1^{\alpha_1} z_2^{\alpha_2} z_3^{\alpha_3}\colon   \alpha_2 \geq 0,  \alpha_3 \geq 0, \alpha_1 \geq -2 - \alpha_2 - \alpha_3\}$ is a complete orthogonal set for $L^2_a(D_2)$.

 \begin{pr}
 The squared $L^2(D_2)$-norms satisfy
 \begin{equation}
 \|z^{\alpha} \|^2_{L^2} = \frac{  \pi^3   \Gamma\left( \alpha_2 +1 \right)  \Gamma\left( \alpha_1 + \alpha_2 + \alpha_3 + 3 \right)}{ ( \alpha_3 +1) \left( \alpha_1 + 2\alpha_2  +  2\alpha_3 +5 \right)   \Gamma\left( \alpha_1 + 2\alpha_2 + \alpha_3 + 4 \right)  },
 \end{equation}
where $\alpha_2 \geq 0,  \alpha_3 \geq 0, \alpha_1 \geq -2 - \alpha_2 - \alpha_3$.
 \end{pr}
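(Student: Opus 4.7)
The set of monomials $\{z^{\alpha}\}$ described above is orthogonal under the $L^2(D_2)$ inner product because $D_2$ is a complete Reinhardt domain, so the angular integrations decouple. Concretely, passing to polar coordinates $z_j = r_j e^{i\theta_j}$ and integrating out the three angles simply produces a factor $(2\pi)^3$, and leaves
\[
\|z^{\alpha}\|^2_{L^2} = (2\pi)^3 \int r_1^{2\alpha_1+1} r_2^{2\alpha_2+1} r_3^{2\alpha_3+1}\, dr_1\, dr_2\, dr_3
\]
over the region $\{r_3^2 < r_1^4+r_2^2,\ r_1^4+r_2^2 < r_1^2\}$. The first thing I would do is carry out the $r_3$--integral, which is elementary and yields $(r_1^4+r_2^2)^{\alpha_3+1}/(2\alpha_3+2)$.

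Next I would pass to the squared radii $x=r_1^2$, $y=r_2^2$, turning the remaining region into $\{0<x<1,\ 0<y<x-x^2\}$, and rewrite the integral as
\[
\|z^{\alpha}\|^2_{L^2} = \frac{\pi^3}{\alpha_3+1} \int_0^1 x^{\alpha_1}\int_0^{x-x^2} y^{\alpha_2}(x^2+y)^{\alpha_3+1}\, dy\, dx .
\]
The key substitution is $v = y/x^2$, under which $(x^2+y)^{\alpha_3+1}$ becomes $x^{2\alpha_3+2}(1+v)^{\alpha_3+1}$, the upper limit on $v$ becomes $1/x-1$, and all dependence on $x$ in the integrand is reduced to a single power $x^{\alpha_1+2\alpha_2+2\alpha_3+4}$. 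I would then Fubini--swap the order of integration: $v$ ranges over $(0,\infty)$ and $x$ over $(0,1/(1+v))$, so the $x$--integral is immediate and gives a factor $(1+v)^{-(\alpha_1+2\alpha_2+2\alpha_3+5)}/(\alpha_1+2\alpha_2+2\alpha_3+5)$.

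What remains is $\int_0^{\infty} v^{\alpha_2}(1+v)^{-(\alpha_1+\alpha_2+\alpha_3+4)}\,dv$, which is a standard Beta integral equal to $B(\alpha_2+1,\,\alpha_1+\alpha_2+\alpha_3+3) = \Gamma(\alpha_2+1)\Gamma(\alpha_1+\alpha_2+\alpha_3+3)/\Gamma(\alpha_1+2\alpha_2+\alpha_3+4)$. Combining the three scalar factors $(2\alpha_3+2)^{-1}$, $(\alpha_1+2\alpha_2+2\alpha_3+5)^{-1}$, and the Beta value then produces exactly the claimed formula. There is no genuine obstacle; the only bookkeeping concern is verifying that the constraints $\alpha_2\ge 0$, $\alpha_3\ge 0$, $\alpha_1\ge -2-\alpha_2-\alpha_3$ guarantee convergence of each step—namely $\alpha_3>-1$ and $\alpha_2>-1$ for the $r_3$ and $v$ integrals, and $\alpha_1+\alpha_2+\alpha_3+3>0$ for the Beta integral—each of which follows immediately from the hypotheses.
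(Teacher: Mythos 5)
Your proof is correct and is essentially the paper's own computation: both reduce the volume integral, after integrating out the angular variables and the $r_3$ variable, to the Beta integral $B(\alpha_2+1,\alpha_1+\alpha_2+\alpha_3+3)=\Gamma(\alpha_2+1)\Gamma(\alpha_1+\alpha_2+\alpha_3+3)/\Gamma(\alpha_1+2\alpha_2+\alpha_3+4)$ together with the scalar factors $(2\alpha_3+2)^{-1}$ and $(\alpha_1+2\alpha_2+2\alpha_3+5)^{-1}$ --- the paper by passing to polar coordinates in the variables $(r_1^2,r_2)$ so that the constraint becomes $\rho<\sin\theta$, you by the substitution $v=y/x^2$ followed by a Fubini swap. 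The only blemish is a transcription slip: the remaining integral should read $\int_0^{\infty}v^{\alpha_2}(1+v)^{-(\alpha_1+2\alpha_2+\alpha_3+4)}\,dv$ (your written exponent drops an $\alpha_2$), but the Beta value you then assign is the one corresponding to the correct exponent, so the final formula is unaffected.
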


\begin{proof}
$$\|z^{\alpha} \|^2_{L^2} = \int\limits_{D_2} |z|^{2\alpha} dV(z)$$

we introduce polar coordinate in each variable by putting $z_j=r_j e^{i\varphi_j}$, for $j=1,2,3$. After doing so, and integrating out the angular variables we have

$$(2 \pi)^3 \int_{Re(D_2)} r^{2 \alpha +1} r_2^{2 \alpha_2+1} r_3^{2 \alpha_3 +1}dV(r),$$

where $Re(D_2)=\{ r \in \mathbb{R}_{+}^3 : r_3^{2} < r_1^{4} + r_2^2, \ r_1^{4} + r_2^2  < r_1^{2} \}$. Next we set $r_1^2=t$ and change variables again. We obtain

$$ \frac{(2 \pi)^3}{2} \int_{Re(D_2')} t^{ \alpha_1} r_2^{ 2\alpha_2 +1} r_3^{ 2\alpha_3 +1}  dt dr_2 dr_3,$$

where $Re(D_2')=\{ (t,r_2,r_3) \in \mathbb{R}_{+}^3 : r_3^{2} < t^{2} + r_2^2, \ t^{2} + r_2^2  < t \}$. Next we use spherical coordinate in the $t,r_2$ variables to obtain

$$ 4 \pi^3 \int_{0}^{\pi/2}  \int_0^{\sin \theta } \int_0^{\rho}  \rho^{ \alpha_1 + 2\alpha_2  + 2 }  (\cos \theta )^{2\alpha_2 + 1} (\sin \theta)^{ \alpha_1}  r_3^{2 \alpha_3 +1}  dr_3 d\rho  d \theta$$
After integrating out $r_3, \rho$ and $\theta$ we obtain the desired result.
\end{proof}

 \begin{pr}
 The squared $L^2(D_1)$-norms satisfy
 \begin{equation}
 \|z^{\alpha} \|^2_{L^2} = \frac{  \pi^4  \Gamma\left( \alpha_1 +1 \right) \Gamma\left( \alpha_2 +1 \right) \Gamma\left( \alpha_3 + 1 \right)  \Gamma\left( \frac{2\alpha_1 + 2\alpha_2 + 4}{p}  + \alpha_3 + \frac{2\alpha_4 +2}{ \lambda} + 1 \right)}{ p \Gamma\left( \alpha_1 + \alpha_2 + 2 \right) ( \alpha_4 +1) \left( s + \frac{\alpha_4 + 1}{ \lambda} \right)   \Gamma\left(2 s \right)  },
 \end{equation}
where $s= \frac{\alpha_1 +  \alpha_2 + 2}{p}  + \alpha_3 + \frac{\alpha_4 + 1}{ \lambda} + 1 $.
 \end{pr}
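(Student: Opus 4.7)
The plan is to parallel the computation done for $D_2$ in the previous proposition. First, write
$$\|z^\alpha\|^2_{L^2} = \int_{D_1}\prod_{j=1}^4 |z_j|^{2\alpha_j}\, dV(z),$$
pass to polar coordinates $z_j = r_j e^{i\varphi_j}$, and integrate out the four angular variables. This produces a factor $(2\pi)^4$ and leaves an integral of $\prod_j r_j^{2\alpha_j+1}$ over the real shadow $Re(D_1) \subset \mathbb{R}_+^4$.

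Since the defining inequalities only involve $r_1^2 + r_2^2$, introduce planar polar coordinates $r_1 = \rho\cos\theta$, $r_2 = \rho\sin\theta$ with Jacobian $\rho$. Integrating $\theta$ out over $[0,\pi/2]$ yields the Beta factor $\Gamma(\alpha_1+1)\Gamma(\alpha_2+1)/[2\Gamma(\alpha_1+\alpha_2+2)]$. Next, substituting $u = \rho^{2p}$ and $t = r_3^2$ collapses the defining inequalities to $r_4^\lambda < u + t$ and $u + t < \sqrt{u}$. Integrating $r_4$ against $r_4^{2\alpha_4+1}$ up to $(u+t)^{1/\lambda}$ contributes $(u+t)^{(2\alpha_4+2)/\lambda}/[2(\alpha_4+1)]$, reducing the problem to the two-dimensional integral
$$I := \int u^{(\alpha_1+\alpha_2+2)/p - 1}\, t^{\alpha_3}\, (u+t)^{(2\alpha_4+2)/\lambda}\, du\, dt$$
taken over $\{(u,t) \in \mathbb{R}_+^2 : u + t < \sqrt{u}\}$.

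The main obstacle is to decouple $u$ and $t$ simultaneously in the constraint $u + t < \sqrt{u}$ and the factor $(u+t)^{(2\alpha_4+2)/\lambda}$. The key trick is the rescaling $t = u\xi$, $dt = u\, d\xi$: the constraint becomes $u < (1+\xi)^{-2}$, independent of the rest of the integrand, and $(u+t)^{(2\alpha_4+2)/\lambda}$ factors as $u^{(2\alpha_4+2)/\lambda}(1+\xi)^{(2\alpha_4+2)/\lambda}$. The inner $u$-integral is then an elementary monomial integral producing $[(A+1)(1+\xi)^{2A+2}]^{-1}$ with $A = (\alpha_1+\alpha_2+2)/p + \alpha_3 + (2\alpha_4+2)/\lambda$. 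The outer $\xi$-integral is the classical Beta identity
$$\int_0^\infty \xi^{\alpha_3}(1+\xi)^{-2s}\, d\xi = \frac{\Gamma(\alpha_3+1)\Gamma(2s - \alpha_3 - 1)}{\Gamma(2s)},$$
since one checks $2A + 2 - (2\alpha_4+2)/\lambda = 2s$. Noting that $A + 1 = s + (\alpha_4+1)/\lambda$ and $2s - \alpha_3 - 1 = (2\alpha_1+2\alpha_2+4)/p + \alpha_3 + (2\alpha_4+2)/\lambda + 1$, and collecting the constant prefactor $(2\pi)^4/[2 \cdot 2p \cdot 2(\alpha_4+1) \cdot 2] = \pi^4/[p(\alpha_4+1)]$ together with the Beta/Gamma factors, gives exactly the formula in the statement. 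The only conceptual move is recognizing that the homogeneity of the bound $\sqrt{u}$ is compatible with the rescaling $t = u\xi$; everything else is bookkeeping.
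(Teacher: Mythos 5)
Your proposal is correct, and it follows the paper's computation step for step up to the reduction to a two‑dimensional real integral: the same polar coordinates in each $z_j$, the same planar substitution $r_1=\rho\cos\theta$, $r_2=\rho\sin\theta$ producing the factor $\Gamma(\alpha_1+1)\Gamma(\alpha_2+1)/[2\Gamma(\alpha_1+\alpha_2+2)]$, and the same early integration of $r_4$ up to $(\,\cdot\,)^{1/\lambda}$. The only place you diverge is the final decoupling. The paper substitutes $u=\rho^{p}$, so the constraint becomes the half‑disk $u^2+r_3^2<u$, and then uses polar coordinates $u=R\cos\theta$, $r_3=R\sin\theta$ (hence $R<\cos\theta$, $r_4<R^{2/\lambda}$), ending with the Euler integral $\int_0^{\pi/2}\sin^{2\alpha_3+1}\theta\,\cos^{2b-1}\theta\,d\theta$. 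You instead take $u=\rho^{2p}$, $t=r_3^2$ and exploit the homogeneity of the bound $\sqrt{u}$ via $t=u\xi$, ending with $\int_0^\infty \xi^{\alpha_3}(1+\xi)^{-2s}d\xi$. These are the same Beta function evaluated in two standard parametrizations, so the difference is cosmetic rather than structural; your identities $2A+2-(2\alpha_4+2)/\lambda=2s$, $A+1=s+(\alpha_4+1)/\lambda$, and the constant $\pi^4/[p(\alpha_4+1)]$ all check out against the stated formula.
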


\begin{proof}
$$\|z^{\alpha} \|^2_{L^2} = \int\limits_{D_1} |z|^{2\alpha} dV(z)$$

we introduce polar coordinate in each variable by putting $z_j=r_j e^{i\varphi_j}$, for $j=1,2,3,4$. After doing so, and integrating out the angular variables we have

$$(2 \pi)^4 \int_{Re(D_1)} r^{2 \alpha +1}\, dV(r),$$

where $Re(D_1)=\{ r \in \mathbb{R}_{+}^4 : r_4^{\lambda} < (r_1^2 + r_2^2)^{p} +  r_3^2, \ (r_1^2 + r_2^2)^p + r_3^2 < r_1^{p/2} \}$. Next we set $r_1=\rho \cos \omega$, $r_2=\rho \sin \omega$ and change variables again. We obtain

$$(2 \pi)^4 \int\limits_{\substack {r_4^{\lambda} < \rho^{2p} +  r_3^2 \\  \rho^{2p} + r_3^2 < \rho^{p}}} \int_0^{\pi/2}   \rho^{2\alpha_1  + 2\alpha_2  + 3 }  (\cos \omega )^{2\alpha_1 + 1} (\sin \omega )^{2\alpha_2+1} r_3^{ 2\alpha_3 +1} r_4^{ 2\alpha_4 +1}\, d\omega d\rho dr_3 dr_4, $$
integrating out of $\omega$ variable we have

$$\frac{(2 \pi)^4 \Gamma(\alpha_1 +1 )   \Gamma(\alpha_2 +1 )  }{2 \Gamma(\alpha_1 + \alpha_2 + 2 )} \int\limits_{\substack {r_4^{\lambda} < \rho^{2p} +  r_3^2 \\  \rho^{2p} + r_3^2 < \rho^{p}}}    \rho^{2\alpha_1  + 2\alpha_2  + 3 }   r_3^{ 2\alpha_3 +1} r_4^{ 2\alpha_4 +1}\, d\omega d\rho dr_3 dr_4, $$

After little calculation we obtain

$$ C \int_{0}^{\pi/2}  \int_0^{\cos \theta } \int_0^{R^{2/ \lambda}}  R^{\frac{2\alpha_1 + 2\alpha_2 + 4}{p} +  2\alpha_3 + 1 }  (\sin \theta )^{ 2\alpha_3 + 1} (\cos \theta)^{\frac{2\alpha_1 + 2\alpha_2 + 4}{p} -1}  r_4^{2 \alpha_4 +1} dr_4 dR  d \theta,$$
where $C= \frac{8 \pi^4 \Gamma(\alpha_1 +1 )   \Gamma(\alpha_2 +1 )}{p \Gamma( \alpha_1 + \alpha_2 +2) }$. After integrating out $r_4, R$ and $\theta$ we obtain the desired result.
\end{proof}

Now we will prove main theorem. We set $\nu_j=z_j \overline{w_j}$, for $j=1,2,3,4$. By the series representation Bergman kernel for $D_1$ is given by
\begin{eqnarray*}
 K(z,w)=  \frac{p}{ \pi^4} \sum_{\alpha=0}^{\infty}  \frac{  \Gamma\left(  \alpha_1 + \alpha_2 + 2 \right) ( \alpha_4 +1) \left( s + \frac{\alpha_4 + 1}{ \lambda} \right)   \Gamma\left( 2 s \right)      }{     \Gamma\left( \frac{2\alpha_1 + 2\alpha_2 + 4}{p} + \alpha_3 + \frac{2\alpha_4 +2}{ \lambda} + 1 \right) \alpha_1! \alpha_2! \alpha_3!  }  \nu^{\alpha}
 \end{eqnarray*}
If we define

\begin{eqnarray*}
 G=   \sum_{\alpha=0}^{\infty}  \frac{  \Gamma\left(  \alpha_1 + \alpha_2 + 2 \right) ( \alpha_4 +1)  \Gamma\left( 2 s \right)      }{     \Gamma\left( \frac{2\alpha_1 + 2\alpha_2 + 4}{p} + \alpha_3 + \frac{2\alpha_4 +2}{ \lambda} + 1 \right) \alpha_1! \alpha_2! \alpha_3!  }  \nu^{\alpha}
 \end{eqnarray*}

then we can write

\begin{eqnarray*}
 K(z,w)=  D_{p,\lambda, \alpha} G,
 \end{eqnarray*}

where $ D_{p,\lambda, \alpha}$ is a differential operator defined by
$$ D_{p,\lambda, \alpha}f= \frac{p}{ \pi^4} \left(\frac{2}{p}  \frac{\partial}{\partial \nu_1} \nu_1 + \frac{2}{p}   \frac{\partial}{\partial \nu_2} \nu_2 +  \frac{\partial}{\partial \nu_3} \nu_3 + \frac{2}{\lambda}  \frac{\partial}{\partial \nu_4} \nu_4 \right)f. $$

Now we sum out the  $\nu_3$ variable using lemma \ref{lem3}, we obtain

\begin{align*}
G= \sum_{\alpha_1,\alpha_2,\alpha_4=0}^{\infty}   \frac{ \Gamma\left(  \alpha_1 + \alpha_2 + 2 \right) (\alpha_4 +1) \Gamma(a+1) F\left( \frac{1+a}{2}, \frac{2+a}{2} ; a ; 4 \nu_3 \right)  }{\Gamma(a) \alpha_1! \alpha_2!   }  \nu_1^{\alpha_1} \nu_2^{\alpha_2} \nu_4^{\alpha_4} ,
\end{align*}

where $a=\frac{2\alpha_1 + 2\alpha_2 + 4}{p} + \frac{2\alpha_4 +2}{ \lambda} + 1 $.
After some calculation using $$ F\left( \frac{a+1}{2}, \frac{a+2}{2}; a; z \right) = \frac{2^{a-1}((a-2) (\sqrt{1-z}-1) + (a-1)z) }{a z (1-z)^{3/2} ( \sqrt{1-z} +1)^{a-2} } $$
and  $z\Gamma(z)=\Gamma(z+1)$, we have

$$G=C  \sum_{\alpha_1,\alpha_2,\alpha_4=0}^{\infty} \frac{ \Gamma\left(  \alpha_1 + \alpha_2 + 2 \right) (\alpha_4 +1) }{  {[(a-2) (\sqrt{1-4\nu_3}-1) + (a-1)4\nu_3]}^{-1}  \alpha_1! \alpha_2!   }  \mu_1^{\alpha_1} \mu_2^{\alpha_2} \mu_4^{\alpha_4}, $$

where $C=\frac{ 2^{4/p + 2/\lambda} }{4\nu_3 (1-4\nu_3)^{3/2} (1+ \sqrt{1- 4\nu_3})^{4/p + 2/\lambda -1}}$, $\mu_i= \frac{2^{2/p} \nu_i} {(1+ \sqrt{1- 4\nu_3})^{2/p} }$ for $i=1,2$ and $\mu_4= \frac{2^{2/\lambda} \nu_4} {(1+ \sqrt{1- 4\nu_3})^{2/\lambda} } .$

\begin{align*}
&G= C \sum_{\alpha_1,\alpha_2,\alpha_4=0}^{\infty} \frac{ \Gamma\left(  \alpha_1 + \alpha_2 + 2 \right) (\alpha_4 +1) }  {  \alpha_1! \alpha_2!   }   \\ & \cdot \left[( \frac{2}{\lambda} -1 )(\sqrt{1-4\nu_3}-1)  + \frac{8\nu_3}{\lambda}\right] \mu_1^{\alpha_1} \mu_2^{\alpha_2} \mu_4^{\alpha_4}  \\& +  C \sum_{\alpha_1,\alpha_2,\alpha_4=0}^{\infty} \frac{2 \Gamma\left(  \alpha_1 + \alpha_2 + 3 \right) (\alpha_4 +1)}  { p  \alpha_1! \alpha_2!   }    \left[\sqrt{1-4\nu_3}  + 4\nu_3 - 1\right] \mu_1^{\alpha_1} \mu_2^{\alpha_2} \mu_4^{\alpha_4}  \\& +  C \sum_{\alpha_1,\alpha_2,\alpha_4=0}^{\infty} \frac{2 \Gamma\left(  \alpha_1 + \alpha_2 + 2 \right) (\alpha_4 +1) \alpha_4 }  { \lambda  \alpha_1! \alpha_2!   }    \left[\sqrt{1-4\nu_3}  + 4\nu_3 - 1\right] \mu_1^{\alpha_1} \mu_2^{\alpha_2} \mu_4^{\alpha_4}
\end{align*}

Finally using well knows formulas $\sum\limits_{k=0}^{\infty } k (k+1) x^k= \frac{2 x}{(1-x)^3}$, $\sum\limits_{k=0}^{\infty } (k+1) x^k=\frac{1}{(1-x)^2}$ and $F_2(r,1,1;1,1,x,y)= \frac{1}{(1-x-y)^r}$, we obtain

\begin{align*}
&G= C \frac { ( \frac{2}{\lambda} -1 )(\sqrt{1-4\nu_3}-1)  + \frac{8\nu_3}{\lambda} }{(1-\mu_4)^2 (1- \mu_1 - \mu_2)^2 }   +  C  \frac{ 4 \sqrt{1-4\nu_3}  + 16\nu_3 - 4} {p (1-\mu_4)^2 (1- \mu_1 - \mu_2)^3}   \\& +  C  \left[\sqrt{1-4\nu_3}  + 4\nu_3 - 1\right] \frac{4\mu_4}{\lambda (1-\mu_4)^3 (1- \mu_1 - \mu_2)^2}
\end{align*}

Similarly we can compute Bergman kernel for $$\Omega= \left\{ (z_1,z_2,z_3) \in \mathbb{C}^{n+m+k}  \colon  \|z_1\|^{\lambda} < \|z_2\|^{2p} + \|z_3\|^2, \ \|z_2\|^{2p} + \|z_3\|^2  < \|z_2\|^{p} \right\}.$$

Now we will prove Theorem 1.2. Similarly as above Bergman kernel for $D_2$ is given by
\begin{eqnarray*} \sum_{\alpha_1, \alpha_2=0}^{\infty}    \sum_{\alpha_1=-2-\alpha_2 - \alpha_3}^{\infty} \frac{  ( \alpha_3 +1) \left( \alpha_1 + 2\alpha_2  +  2\alpha_3 +5 \right)   \Gamma\left( \alpha_1 + 2\alpha_2 + \alpha_3 + 4 \right)  }{  \pi^3  \Gamma\left( \alpha_2 +1 \right)  \Gamma\left( \alpha_1 + \alpha_2 + \alpha_3 + 3 \right) } \nu^{\alpha}
 \end{eqnarray*}
Changing the summation index $\alpha_1=k -2 -\alpha_2 - \alpha_3$, we can write
\begin{eqnarray*} \sum_{\alpha_1, \alpha_2=0,k=0}^{\infty}   \frac{1}{\nu_1^2} \frac{  ( \alpha_3 +1) \left( k + \alpha_2  +  \alpha_3 + 3 \right)   \Gamma\left( k + \alpha_2 +  2 \right)  }{  \pi^3  \Gamma\left( \alpha_2 +1 \right)  \Gamma\left( k + 1 \right) } \nu_1^{k} \left( \frac{\nu_2}{\nu_1} \right)^{\alpha_2} \left(\frac{\nu_3}{\nu_1}\right)^{\alpha_3}
 \end{eqnarray*}
After little calculation using $z\Gamma(z)=\Gamma(z+1)$, we have
\begin{eqnarray*} \sum_{\alpha_1, \alpha_2=0,k=0}^{\infty}   \frac{1}{\nu_1^2} \frac{  ( \alpha_3 +1)^2   \Gamma\left( k + \alpha_2 +  2 \right)  }{  \pi^3  \Gamma\left( \alpha_2 +1 \right)  \Gamma\left( k + 1 \right) } \nu_1^{k} \left( \frac{\nu_2}{\nu_1} \right)^{\alpha_2} \left(\frac{\nu_3}{\nu_1}\right)^{\alpha_3} \\ + \sum_{\alpha_1, \alpha_2=0,k=0}^{\infty}   \frac{1}{\nu_1^2} \frac{  ( \alpha_3 +1)   \Gamma\left( k + \alpha_2 +  3 \right)  }{  \pi^3  \Gamma\left( \alpha_2 +1 \right)  \Gamma\left( k + 1 \right) } \nu_1^{k} \left( \frac{\nu_2}{\nu_1} \right)^{\alpha_2} \left(\frac{\nu_3}{\nu_1}\right)^{\alpha_3}
 \end{eqnarray*}

Finally using well knows formulas $\sum\limits_{k=0}^{\infty }  (k+1)^2 x^k= \frac{x+1}{(1-x)^3}$, $\sum\limits_{k=0}^{\infty } (k+1) x^k=\frac{1}{(1-x)^2}$ and $F_2(r,1,1;1,1,x,y)= \frac{1}{(1-x-y)^r}$, we obtain the desired result.

Similarly we can compute Bergman kernel for $$\Omega= \left\{ (z_1,z_2,z_3) \in \mathbb{C}^{1+n+m}  \colon  \|z_3\|^{\lambda} < |z_1|^{2p} + \|z_2\|^2, \ |z_1|^{2p} + \|z_2\|^2  < |z_1|^{p} \right\},$$ for $p, \lambda > 0$.


\bibliographystyle{amsplain}

\noindent Tomasz Beberok\\
Department of Applied Mathematics\\
University of Agriculture in Krakow\\
ul. Balicka 253c, 30-198 Krakow, Poland\\
email: tbeberok@ar.krakow.pl

\end{document}